\documentclass[12pt,reqno]{article}

\usepackage[usenames]{color} 
\usepackage{amssymb} 
\usepackage{amsmath} 
\usepackage{amsthm} 
\usepackage{amsfonts} 
\usepackage{amscd} 
\usepackage{graphicx} 
\usepackage{xcolor} 
\usepackage{float}
\usepackage{array}
\usepackage{booktabs}

\usepackage[hidelinks]{hyperref}

\definecolor{webgreen}{rgb}{0,.5,0} 
\definecolor{webbrown}{rgb}{.6,0,0}

\usepackage{fullpage}

\usepackage{graphics} 
\usepackage{latexsym} 
\usepackage{epsf} 

\newcommand{\seqnum}[1]{\href{https://oeis.org/#1}{\rm \underline{#1}}}

\begin{document}


\theoremstyle{plain} 
\newtheorem{theorem}{Theorem} 
\newtheorem{corollary}[theorem]{Corollary} 
\newtheorem{lemma}[theorem]{Lemma} 
\newtheorem{proposition}[theorem]{Proposition}

\theoremstyle{definition} 
\newtheorem{definition}[theorem]{Definition} 
\newtheorem{example}[theorem]{Example} 
\newtheorem{conjecture}[theorem]{Conjecture}

\theoremstyle{remark} 
\newtheorem{remark}[theorem]{Remark}

\title{Repetition Avoidance in Curling-Number Transforms}
\date{}

\author{Geoffrey Caveney\\
Jersey City, NJ\\
USA\\
\href{mailto:geoffreycaveney@gmail.com}{\tt geoffreycaveney@gmail.com}
\and
Haoxuan (Jason) Dong\\
Melbourne, Victoria\\
Australia\\
\href{mailto:donghaoxuan13818851792@gmail.com}{\tt donghaoxuan13818851792@gmail.com}
\and
Jeffrey Shallit\\
School of Computer Science\\
University of Waterloo\\
Waterloo, ON N2L 3G1\\
Canada\\
\href{mailto:shallit@uwaterloo.ca}{\tt shallit@uwaterloo.ca}}

\maketitle

\vskip .2 in
\begin{abstract}
We study repetition avoidance in a word ${\bf w}$ and its curling-number transform $C({\bf w})$.  For alphabets of sizes $2$, $3$, and $4$, we use Thue--Morse-based morphic constructions and exhaustive finite searches.  A ternary word for which both ${\bf w}$ and $C({\bf w})$ are overlap-free has length at most $84$, whereas over four letters an infinite example exists.  Hence $4$ is the smallest alphabet size admitting simultaneous infinite overlap-freeness.  The infinite constructions are verified in {\tt Walnut}; the finite maxima are obtained by exhaustive breadth-first search and checked independently.
\end{abstract}

\section{Introduction}

The curling number of a finite word measures the maximum degree of repetition of its suffixes.  Recording the curling number of each prefix gives the curling-number transform.  We study simultaneous repetition avoidance in a word and its curling-number transform.

Repetition avoidance is a classical topic in combinatorics on words, beginning with Thue's constructions of infinite words avoiding prescribed repetitions~\cite{Thue:1912}.  Curling numbers arose in a different setting, from repetitions at the ends of finite sequences~\cite{Bult,Chaffin}.  The original work also considered curling-number transforms of familiar sequences~\cite{Bult}; here we impose repetition-avoidance conditions simultaneously on a word and on its transform.

Our main result gives the exact alphabet threshold for simultaneous overlap-freeness.  Over three letters the maximum length is $84$ (Theorem~\ref{thm:ternary-overlap-max}); over four letters an infinite example exists (Theorem~\ref{thm:four-overlap}).  Hence the threshold is $4$.

We also obtain four boundary cases over binary and ternary alphabets.  At the exponents $3$, $5/2$, $9/4$, and $7/3$, the relevant $\alpha$-free case has a finite maximum, whereas the corresponding $\alpha^+$-free case admits an infinite construction.  Table~\ref{tab:results} summarizes these results together with the exact alphabet threshold.  The notation $\alpha$-free and $\alpha^+$-free is recalled in Section~\ref{sec:prelim}.

\begin{table}[ht]
\centering
\small
\begin{tabular}{@{}c>{\raggedright\arraybackslash}p{0.26\textwidth}>{\raggedright\arraybackslash}p{0.26\textwidth}>{\raggedright\arraybackslash}p{0.22\textwidth}@{}}
\toprule
Alphabet size & Condition on the source word & Condition on the transform & Outcome \\
\midrule
$2$ & cubefree & cubefree & maximum length $17$ \\
$2$ & $3^+$-free & overlap-free ($2^+$-free) & infinite \\
$2$ & cubefree & overlap-free ($2^+$-free) & maximum length $13$ \\
$2$ & $(5/2)^+$-free & $3^+$-free & infinite \\
$2$ & $(5/2)$-free & $3^+$-free & maximum length $75$ \\
\midrule
$3$ & overlap-free ($2^+$-free) & overlap-free ($2^+$-free) & maximum length $84$ \\
$3$ & $(9/4)^+$-free & overlap-free ($2^+$-free) & infinite \\
$3$ & $(9/4)$-free & overlap-free ($2^+$-free) & maximum length $84$ \\
$3$ & overlap-free ($2^+$-free) & $(7/3)^+$-free & infinite \\
$3$ & overlap-free ($2^+$-free) & $(7/3)$-free & maximum length $84$ \\
\midrule
$4$ & overlap-free ($2^+$-free) & overlap-free ($2^+$-free) & infinite \\
\bottomrule
\end{tabular}
\caption{Summary of the repetition-avoidance results proved in this paper. }
\label{tab:results}
\end{table}

The proofs use two methods.  Infinite examples come from uniform morphisms applied to a $2$-block coding of the Thue--Morse word, with the avoidance properties verified in {\tt Walnut}.  Exact finite bounds come from exhaustive breadth-first search over valid prefixes.  Section~\ref{sec:prelim} introduces the notation and these two methods.  Sections~\ref{sec:binary}, \ref{sec:ternary}, and \ref{sec:four} treat alphabets of sizes $2$, $3$, and $4$, respectively.  The {\tt Walnut} scripts appear in Appendix~\ref{app:walnut}, and Appendix~\ref{sec:repro} describes the standalone finite-search verification.

\section{Preliminaries and proof methods}
\label{sec:prelim}

\subsection{Repetitions and curling-number transforms}

Let $x=x[0..n-1]$ be a finite word of length $n\geq 1$.  If $x=y^k$ for a nonempty word $y$ and an integer $k\geq 1$, then $x$ is a $k$-power.  A $2$-power is a {\it square} (for example, {\tt murmur}), and a $3$-power is a {\it cube} (for example, {\tt shshsh}).  An {\it overlap} is a word of the form $axaxa$, where $a$ is a single letter and $x$ is possibly empty; {\tt alfalfa} is an example.

A finite word is squarefree, cubefree, or overlap-free if it contains no factor of the corresponding type.  We use the same terminology for infinite words.  The Thue--Morse word ${\bf t}=01101001\cdots$, the fixed point beginning in $0$ of the morphism $0\mapsto01$, $1\mapsto10$, is overlap-free~\cite{Thue:1912}.

The curling number of a finite word $x$ is the largest integer $k$ for which $x=yz^k$ with $z$ nonempty.  For example, {\tt brouhaha} has curling number $2$~\cite{Bult,Chaffin}.

For a finite or infinite word $x$, define $C(x)$ by letting $C(x)[i]$ be the curling number of the prefix $x[0..i]$.  We call $C(x)$ the {\it curling-number transform} of $x$.  This direct-prefix convention differs from that of van de Bult et al.~\cite{Bult} only in that their initial $1$ is omitted.  If $x$ is $k$-power-free for an integer $k$, then $C(x)$ takes values in $\{1,2,\ldots,k-1\}$.

\begin{remark}
\label{rem:cubefree-curl}
Let $x$ be a finite or infinite cubefree word. For every position $n$ in the domain of $x$,
\[
C(x)[n]\in\{1,2\},
\]
and
\[
C(x)[n]=2
\quad\Longleftrightarrow\quad
x[0..n]\text{ ends in a square}.
\]
Indeed, cubefreeness rules out curling number at least $3$, while curling number at least $2$ is equivalent to the existence of a square suffix.
\end{remark}

For a squarefree infinite word, the transform is $111\cdots$, so we restrict attention to words containing squares.

We also use fractional powers.  A finite word $x=x[0..n-1]$ has period $p$, where $1\leq p\leq n$, if $x[i]=x[i+p]$ for $0\leq i<n-p$.  If $p$ is the least period of $x$, then the exponent of $x$ is $n/p$, written $\exp(x)=n/p$; for example, {\tt entente} has exponent $7/3$.  A finite or infinite word is $\alpha$-power-free (or $\alpha$-free) if it has no nonempty factor of exponent at least $\alpha$.  It is $\alpha^+$-power-free (or $\alpha^+$-free) if it has no factor of exponent strictly greater than $\alpha$.  Thus $2^+$-free is equivalent to overlap-free.

\subsection{The Thue--Morse source, morphic constructions, and Walnut}
\label{sec:morphic-walnut}

All of the infinite constructions use the same auxiliary sequence.  Starting from the Thue--Morse word ${\bf t}=01101001\cdots$, define ${\bf p}=1321201\cdots$ over the alphabet $\{0,1,2,3\}$ by 
$$ {\bf p}[i] = \begin{cases} 
0, & \text{if } {\bf t}[i..i+1] = 00; \\
1, & \text{if } {\bf t}[i..i+1] = 01; \\
2, & \text{if } {\bf t}[i..i+1] = 10; \\
3, & \text{if } {\bf t}[i..i+1] = 11. 
\end{cases} 
$$ 
The word ${\bf p}$ is $2$-automatic and is generated by a $4$-state automaton.  It is sequence \seqnum{A245188} in the {\it On-Line Encyclopedia of Integer Sequences} (OEIS)~\cite{oeis}.  We search for uniform morphisms $h$ for which $h({\bf p})$, or a slight modification of it, satisfies the required pair of avoidance conditions.  Candidate images are generated by breadth-first search, extending each image from the outside inward.

Each candidate is then verified in {\tt Walnut}, an automatic theorem prover for automatic sequences~\cite{Mousavi:2016,Shallit:2024}.
The automaton $\tt P$ for $\bf p$ can be constructed in {\tt Walnut} as follows: 
\begin{verbatim} 
def p00 "T[n]=@0 & T[n+1]=@0": 
def p01 "T[n]=@0 & T[n+1]=@1": 
def p10 "T[n]=@1 & T[n+1]=@0": 
def p11 "T[n]=@1 & T[n+1]=@1": 
combine P p00=0 p01=1 p10=2 p11=3: 
\end{verbatim}

Several {\tt Walnut} predicates below rule out fractional powers using a period $p$ that need not be least.  This is sufficient.  For fixed $p$ and $\alpha>1$, the shortest length $L$ satisfying $L/p>\alpha$ is
\[
L=\lfloor \alpha p\rfloor+1.
\]
It therefore suffices to compare positions separated by $p$ throughout a factor of this length.  If the chosen $p$ is not least, the least period $q$ is at most $p$, so the exponent can only increase.  Conversely, a factor with least period $q$ and exponent greater than $\alpha$ is detected by its prefix of length $\lfloor\alpha q\rfloor+1$.  For exponent at least $\alpha$, the corresponding length is $\lceil\alpha p\rceil$.  This accounts for the bounds ${\tt t<=2*n}$, ${\tt 2*t<=3*n}$, ${\tt 4*t<=5*n}$, and ${\tt 3*t<=4*n}$ used in the verification scripts.

\subsection{Exhaustive prefix search}

The finite nonexistence results are obtained by exhaustive breadth-first search on valid prefixes.  At level $n$ we retain every length-$n$ word satisfying the two required avoidance conditions.  To construct level $n+1$, every alphabet symbol is appended to every retained word, and the resulting extensions are tested.

This search is exhaustive because validity is prefix-hereditary.  Once a forbidden factor appears in the source word, it remains present in every extension; the same is true for the transform because $C(u)$ is a prefix of $C(uv)$.  Hence every valid word of length $n+1$ extends a word retained at level $n$.  For each exact finite bound we give the size of the last nonempty level and of the next, empty level.

\section{The binary alphabet}
\label{sec:binary}

For binary words, simultaneous cubefreeness is finite, but two relaxations admit infinite constructions.  We also determine the boundary case for each relaxation.

\begin{theorem} 
\label{thm:binary-cube-cube-max}
There is no cubefree binary word of length $> 17$ whose curling-number transform is 
also cubefree. 
\end{theorem}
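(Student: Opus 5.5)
The plan is to run the exhaustive prefix search of Section~\ref{sec:prelim} over the alphabet $\{0,1\}$. This is exact because validity is prefix-hereditary. If $x$ is a cubefree binary word of length $n+1$ and $C(x)$ is cubefree, then $x[0..n-1]$ is also cubefree, and its transform $C(x[0..n-1])$ is a prefix of $C(x)$, hence also cubefree. So it suffices to show that the set of valid words of length $18$ is empty. A longer valid word would have a valid prefix of length $18$.

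By Remark~\ref{rem:cubefree-curl}, for a cubefree source the transform is a word over $\{1,2\}$. We have $C(x)[n]=2$ exactly when $x[0..n]$ ends in a square. This makes the extension test cheap. When appending a letter $a$ to a retained word $u$ of length $n$, three checks are needed.
\begin{itemize}
\item[(i)] $ua$ has no cube suffix; only suffixes need checking, since $u$ is already cubefree.
\item[(ii)] Compute the new transform symbol $c$, which is $2$ if $ua$ has a square suffix $zz$ and $1$ otherwise.
\item[(iii)] Check that $C(u)c$ has no cube suffix.
\end{itemize}
Each check scans suffix periods $p\le (n+1)/2$ or $p\le(n+1)/3$. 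The search can also use the symmetry $0\leftrightarrow1$, which preserves both cubefreeness and the transform. So we may start with $x[0]=0$ and double the counts at the end.

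The steps are then as follows. First, list the valid words level by level from length $1$ up to length $17$, recording the level sizes. Second, confirm that level $17$ is nonempty by exhibiting one witness $x$ of length $17$ with its transform $C(x)$. Checking this witness by hand also shows that the bound $17$ is sharp. Third, show that level $18$ is empty. The levels stay small, since the transform constraint is quite restrictive. For instance, $C(x)$ cannot contain $111$ or $222$. So the computation is tiny and can be written out or checked independently, as described in Appendix~\ref{sec:repro}.

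A hand argument would help explain why the bound holds, but it is not needed for the proof. One route is to note that a cubefree binary word cannot avoid squares for more than a few positions. Combined with the ban on three consecutive $2$'s in $C(x)$, this squeezes the possibilities.

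The main obstacle is reliability rather than difficulty. The finite search must be shown to be correct and exhaustive. In particular, the curling number must be computed with the direct-prefix convention, and the cube test on $C(x)$ must cover all periods, not just suffixes ending at the new position. Since each earlier prefix was already tested, suffix checks do suffice inductively. I would cross-validate by an independent brute force over all $2^{18}$ binary words of length $18$, checking each directly from the definitions.
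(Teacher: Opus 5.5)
Your proposal is correct and follows essentially the paper's proof. Both use an exhaustive breadth-first search over valid prefixes, justified by prefix-heredity, with the transform symbol computed by the square-suffix test of Remark~\ref{rem:cubefree-curl}; the paper reports that this search finds exactly $2$ valid words of length $17$ (namely $00110100110010110$ and its complement, with transform $12121122121221221$) and none of length $18$, so your plan is complete once the search is run and these terminal counts are recorded.
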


\begin{proof} 
The exhaustive search finds exactly $2$ valid words of length $17$ and none of length $18$.  Thus the maximum length is $17$.  Up to interchange of the two binary symbols, the unique word of length $17$ is
$$00110100110010110,$$
with curling-number transform $12121122121221221$.
\end{proof}

\subsection{An overlap-free transform}

If cubes are allowed but powers of exponent greater than $3$ are forbidden, an infinite example becomes possible.

\begin{theorem} 
\label{thm:binary-threeplus-overlap}
There is an infinite $3^+$-free binary word whose curling-number transform is overlap-free. 
\end{theorem}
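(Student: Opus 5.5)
We will prove Theorem~\ref{thm:binary-threeplus-overlap} by giving an explicit morphic construction, following the method of Section~\ref{sec:morphic-walnut}. We look for a $k$-uniform morphism $h:\{0,1,2,3\}^*\to\{0,1\}^*$ such that ${\bf w}=h({\bf p})$, or ${\bf w}$ obtained from it by a short prefix modification, has the two required properties. The candidate images $h(0),h(1),h(2),h(3)$ are found by breadth-first search. A candidate is kept only if a long prefix of $h({\bf p})$ is $3^+$-free and has an overlap-free curling-number transform.

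Once $h$ is fixed, ${\bf w}$ is $2$-automatic, since ${\bf p}$ is $2$-automatic and $h$ is uniform. We build ${\bf w}$ in {\tt Walnut} through the automaton {\tt P}: the letter ${\bf w}[kn+r]$ is the $r$-th letter of $h({\bf P}[n])$, and we combine the corresponding predicates. The $3^+$-freeness of ${\bf w}$ then becomes a first-order statement. There must be no $i$ and $n\ge1$ such that ${\bf w}[i+t]={\bf w}[i+t+n]$ for all $t$ with $t+n<3n+1$, that is, for all $t$ in the factor of length $\lfloor 3n\rfloor+1$. As explained in Section~\ref{sec:morphic-walnut}, a non-least period is harmless here. {\tt Walnut} decides this statement.

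Next we build $C({\bf w})$ in {\tt Walnut}. Because ${\bf w}$ is $3^+$-free, every curling number lies in $\{1,2,3\}$. For $c\in\{2,3\}$, $C({\bf w})[m]\ge c$ holds exactly when there is an $n\ge1$ with $(c)n\le m+1$ such that ${\bf w}[m+1-cn..m]$ has period $n$. This is first-order expressible: there is an $n$ such that for all $u<(c-1)n$, ${\bf w}[m+1-cn+u]={\bf w}[m+1-cn+u+n]$. So we define predicates for "curl $\ge2$" and "curl $\ge3$" and {\tt combine} them into an automaton {\tt CW} for $C({\bf w})$. We then check overlap-freeness of {\tt CW} with the predicate that has no $i$ and $n\ge1$ with ${\bf CW}[i+t]={\bf CW}[i+t+n]$ for all $t\le n$, which corresponds to the bound ${\tt t<=2*n}$ on the factor length. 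If both checks return {\tt TRUE}, the theorem follows. If the first letter or letters create an early problem, such as a short prefix forcing a bad transform, we replace ${\bf w}$ by a suitable suffix or prepend a fixed short word, which {\tt Walnut} handles equally well.

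The main obstacle is finding a suitable $h$. The condition on $C({\bf w})$ is global and not preserved under concatenation, so many candidates fail only after long prefixes, and the image length $k$ may have to be fairly large. Our first attempt is to search lengths $k=2,3,\dots$ in increasing order, extending images from the outside inward as described, and to prune with the finite checks on prefixes of $h({\bf p})$ of a few thousand letters. A secondary concern is the size of the intermediate automata for $C({\bf w})$, since the existential quantifier over $n$ can cause state blow-up. If that happens, we compute the "curl $\ge c$" predicates separately and minimize each before combining them.
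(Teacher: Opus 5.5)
Your verification framework is sound and essentially coincides with the paper's. The paper uses the same ingredients: the $3^+$-freeness predicate with bound ${\tt t<=2*n}$, and the square- and cube-suffix predicates. It combines three mutually exclusive curling-number predicates into an automaton for the transform, which is legitimate because $3^+$-freeness caps curling numbers at $3$. It then applies the overlap test with ${\tt t<=n}$.

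The gap is that you never exhibit the word. The theorem is a pure existence statement. Its entire content is a concrete witness together with a successful decision-procedure run on it. Your proposal reduces the theorem to ``if the breadth-first search finds a suitable $h$ and both {\tt Walnut} checks return {\tt TRUE}, the theorem follows.'' Nothing in your argument guarantees that any uniform morphic image of ${\bf p}$, with or without a prefix modification, has both properties. As written, this is a search plan rather than a proof, and it would establish nothing if the search came up empty.

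The missing ingredient is the specific morphism. The paper supplies the $5$-uniform morphism $h_1$ with $0\mapsto 01110$, $1\mapsto 00101$, $2\mapsto 10111$, $3\mapsto 01100$. It takes $Q1=h_1({\bf p})$ directly, with no prefix modification, and {\tt Walnut} confirms both {\tt threeplusfree} and {\tt checkoverlap}. So your concerns about needing a large image length or about state blow-up do not arise here. Still, only an explicit witness of this kind, together with the reported outcome of the checks, turns your outline into a proof.
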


\begin{proof} 
Define the uniform morphism
\begin{align*} 
h_1(0) &= 01110 \\
h_1(1) &= 00101 \\
h_1(2) &= 10111 \\
h_1(3) &= 01100 . 
\end{align*}
Let $Q1=h_1({\bf p})$.  The {\tt Walnut} verification script is given in Appendix~\ref{app:h1}.  The predicates ${\tt has2}$ and ${\tt has3}$ detect square and cube suffixes of the prefix ending at position $n$.  The predicate ${\tt threeplusfree}$ verifies that $Q1$ has no factor of exponent greater than $3$, so every curling number belongs to $\{1,2,3\}$.  The mutually exclusive predicates ${\tt cn1}$, ${\tt cn2}$, and ${\tt cn3}$ therefore define $C(Q1)$ exactly.  Finally, ${\tt checkoverlap}$ verifies that $C(Q1)$ is overlap-free.
\end{proof}

Requiring the source word itself to be cubefree makes the problem finite again.

\begin{theorem} 
\label{thm:binary-cube-overlap-max}
The longest cubefree binary word whose curling-number transform is overlap-free has length $13$. 
\end{theorem}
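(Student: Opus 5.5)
The plan follows the exhaustive prefix search of Section~\ref{sec:prelim}, in the same way as the proof of Theorem~\ref{thm:binary-cube-cube-max}. The condition ``$w$ is cubefree and $C(w)$ is overlap-free'' is prefix-hereditary: a cube in $w$ persists in every extension, and $C(u)$ is a prefix of $C(uv)$. So a breadth-first search over binary prefixes is exhaustive. By Remark~\ref{rem:cubefree-curl}, for a cubefree source the transform is a word over $\{1,2\}$, and $C(w)[n]=2$ exactly when $w[0..n]$ ends in a square. Each step of the search therefore only needs two incremental tests on the newest position:
\begin{itemize}
\item a cube test on the suffixes of the extended word;
\item a square-suffix test to compute the new transform symbol, followed by an overlap test on the suffixes of the extended transform.
\end{itemize}
Only suffixes ending at the new position need checking, since every earlier factor was already checked at a previous level.

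By the symmetry $0\leftrightarrow1$, which commutes with $C$, I will fix the first letter to be $0$ and halve the search. I will then tabulate the number of surviving prefixes at each length. The claim is established by showing that level $13$ is nonempty and level $14$ is empty. As in the other finite results, I will report the sizes of these two levels and exhibit a witness of length $13$ together with its transform, so that the reader can check by hand that:
\begin{itemize}
\item the witness contains no cube;
\item its transform over $\{1,2\}$ contains no factor $axaxa$.
\end{itemize}

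A quick heuristic sanity check explains why the search terminates early. A binary cubefree word cannot stay squarefree beyond length $3$, so squares occur frequently and the transform contains many $2$'s. Runs like $22$ or $1212$ in the transform quickly threaten overlaps such as $222$ or $21212$. Meanwhile, forbidding cubes prevents the source from ``absorbing'' repetitions in the way the $3^+$-free construction of Theorem~\ref{thm:binary-threeplus-overlap} does.

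The main obstacle is not conceptual but one of reliability. The result rests entirely on a computation, so I would implement the search twice independently, for instance once by brute-force enumeration of all $2^{14}$ binary words of length $14$ with direct computation of curling numbers, and once by the incremental breadth-first search, and confirm that both agree on the level counts. The brute-force check over length $14$ is small enough to be conclusive on its own. Since every longer valid word would have a valid prefix of length $14$, the emptiness of level $14$ proves that no valid word has length greater than $13$.
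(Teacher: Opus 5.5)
Your proposal is correct and follows essentially the same route as the paper: a prefix-hereditary exhaustive search showing that level $13$ is nonempty and level $14$ is empty (the paper reports exactly $6$ words at level $13$, e.g.\ $0110100110100$ with transform $1121122121122$). One small wording fix: the interchange $0\leftrightarrow1$ does not ``commute'' with $C$ but leaves it unchanged, $C(\overline{w})=C(w)$, and this invariance is what justifies fixing the first letter.
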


\begin{proof} 
The exhaustive search finds exactly $6$ valid words of length $13$ and none of length $14$, so the maximum length is $13$.  Up to interchange of the two symbols, the three length-$13$ words beginning with $0$ are
$$\{0110100110100, 0110100110101, 0110100110110\}.$$
Each has curling-number transform $1121122121122$.
\end{proof}

\subsection{\texorpdfstring{A $3^+$-free transform}{A 3+-free transform}}

We keep the $3^+$-free condition on the transform and lower the allowed source exponent.

\begin{theorem} 
\label{thm:binary-52plus-3plus}
There is an infinite $(5/2)^+$-free binary word whose curling-number transform is $3^+$-free. 
\end{theorem}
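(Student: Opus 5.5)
The approach follows the template of Theorem~\ref{thm:binary-threeplus-overlap}. I will search for a uniform morphism $h_2$ on $\{0,1,2,3\}$ with binary images and set $Q2=h_2({\bf p})$ (or a slight modification, such as dropping or prepending a short prefix). The candidates come from the breadth-first search described in Section~\ref{sec:morphic-walnut}, with the image length taken to be the smallest uniform length that survives a long finite prefix test. For that test, I apply $h_2$ to a prefix of ${\bf p}$ of a few thousand letters and check two things directly: that the word has no factor of exponent $>5/2$, and that its curling-number transform has no factor of exponent $>3$. Only candidates passing this filter go to {\tt Walnut}. Since ${\bf p}$ is $2$-automatic, $h_2({\bf p})$ is automatic with respect to a linear numeration. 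Concretely, $Q2[n] = h_2({\bf p}[\lfloor n/k\rfloor])[n \bmod k]$, which can be expressed in {\tt Walnut} by a {\tt combine}/case definition indexed by $n \bmod k$.

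The verification then has four steps. First, I check $(5/2)^+$-freeness of $Q2$ with a predicate asserting that there are no $i$ and $p\ge1$ with $Q2[j]=Q2[j+p]$ for all $j$ with $i\le j<i+t-p$, where $t=\lfloor 5p/2\rfloor+1$. Using the argument in Section~\ref{sec:morphic-walnut}, it suffices to quantify over lengths with ${\tt 2*t<=5*n}$, giving the bound $2t>5p$. Second, because $Q2$ is $(5/2)^+$-free it is in particular cubefree, so by Remark~\ref{rem:cubefree-curl} $C(Q2)$ is binary. It is then given exactly by the predicate ${\tt has2}$ from Appendix~\ref{app:h1} applied to $Q2$: $C(Q2)[n]=2$ if and only if some $m\ge1$ with $2m\le n+1$ has $Q2[n-2m+1..n-m]=Q2[n-m+1..n]$. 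Third, I define the automaton for $C(Q2)$ from this first-order formula. Fourth, I check that $C(Q2)$ is $3^+$-free by the analogous predicate: no $i,p\ge1$ such that $C(Q2)$ agrees with its shift by $p$ on a window of length $3p+1$.

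The main obstacle is finding a working morphism, not the verification. The constraint that the source be $(5/2)^+$-free is much tighter than $3^+$-free. It is already close to the overlap-free regime, where the Thue--Morse structure tends to force transforms with long runs of $1$s interleaved periodically, which produce high powers in $C$. I would first try image lengths $k=5,6,\dots$ and take the smallest $k$ giving survivors. If no pure $h_2({\bf p})$ works, I would allow a modification: either a prefix tweak, or composition with a letter-to-letter change on early positions, or using the images as blocks of $\bf t$-like words. A secondary concern is {\tt Walnut} state blowup in the {\tt has2} predicate. If it blows up, I would split the definition of $C(Q2)$ by $n \bmod k$ and build it with {\tt combine}, as in Appendix~\ref{app:walnut}.
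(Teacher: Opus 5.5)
Your verification pipeline matches the paper's. You take a uniform morphic image $h_2({\bf p})$, check $(5/2)^+$-freeness in {\tt Walnut}, use the resulting cubefreeness and Remark~\ref{rem:cubefree-curl} to define $C(Q2)$ by a square-suffix predicate, and finally check that this transform is $3^+$-free.

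The gap is that for an existence statement of this kind the witness \emph{is} the proof, and you never supply one. The paper's argument consists of an explicit $25$-uniform morphism $h_2$, applied to ${\bf p}$ with no modification, together with successful runs of {\tt plus52free} and {\tt check2}. Your proposal replaces this with a search whose success is only hoped for, plus a list of fallback modifications. Nothing in it shows that any uniform image of ${\bf p}$ actually has both properties. Your expectation of small image lengths $k=5,6,\dots$ is also far from the length $25$ that was needed. Theorem~\ref{thm:binary-52-3plus-max} shows how tight the target is: weakening the source condition from $(5/2)^+$-free to $(5/2)$-free already caps the length at $75$. Until a concrete morphism is exhibited and the two predicates are verified, the plan does not prove the theorem.

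There is also a slip in translating the exponent into a {\tt Walnut} bound. Your first formulation is correct: compare positions $j$ and $j+p$ for $i\le j<i+t-p$, with $t=\lfloor 5p/2\rfloor+1$. However, in predicates of the paper's form the quantified {\tt t} is an offset in the comparison {\tt Q2[i+t]=Q2[i+n+t]}, not the length of the factor. Covering a factor of length $\lfloor 5n/2\rfloor+1$ requires offsets $0\le t\le\lfloor 3n/2\rfloor$, i.e.\ the bound {\tt 2*t<=3*n}. The bound {\tt 2*t<=5*n} you wrote would only exclude exponents greater than $7/2$. That does not even give cubefreeness, so the binary square-suffix definition of $C(Q2)$ would no longer be justified. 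Similarly, for the $3^+$ test the offsets must satisfy {\tt t<=2*n}, so that the factor itself, not the comparison range, has length $3n+1$.
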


\begin{proof} 
Let
\begin{align*} 
h_2(0) &= 1011001001101001011001101 \\
h_2(1) &= 0110010011010010110010100 \\
h_2(2) &= 1101100101101001100101001 \\
h_2(3) &= 1101100100110100101100100 .
\end{align*}
Let $Q2=h_2({\bf p})$.  Its {\tt Walnut} verification script is given in Appendix~\ref{app:h2}.

The computation of ${\tt plus52free}$ reaches a largest intermediate automaton of $1{,}467{,}844$ states.  The predicate ${\tt cn2q2}$ detects square suffixes.  Since ${\tt plus52free}$ verifies that $Q2$ is $(5/2)^+$-free, $Q2$ is in particular cubefree; Remark~\ref{rem:cubefree-curl} then identifies ${\tt D2}$ with $C(Q2)$.  Finally, ${\tt check2}$ verifies that ${\tt D2}=C(Q2)$ is $3^+$-free.
\end{proof}

\begin{theorem} 
\label{thm:binary-52-3plus-max}
The longest $(5/2)$-free binary word whose curling-number transform is $3^+$-free has length $75$. 
\end{theorem}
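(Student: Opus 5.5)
The plan is to use the exhaustive prefix search described in Section~\ref{sec:prelim}, exactly as in Theorems~\ref{thm:binary-cube-cube-max} and~\ref{thm:binary-cube-overlap-max}. Level $n$ holds every binary word of length $n$ that is $(5/2)$-free and whose curling-number transform is $3^+$-free. To build level $n+1$, each retained word is extended by $0$ and by $1$, and the extension is tested. Validity is prefix-hereditary for both conditions, so the search is complete. I will report the number of survivors at length $75$ (nonzero) and at length $76$ (zero). The maximum length is then exactly $75$.

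Each test can be made incremental, so only the new last position needs to be examined.

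\emph{Source condition.} A new factor of exponent at least $5/2$ must be a suffix of the extended word $x$. For each period $p$, I check whether the suffix of length $\lceil 5p/2\rceil$ has period $p$. As noted in Section~\ref{sec:morphic-walnut}, using a period that need not be least is sufficient.

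\emph{Curling number.} Since $x$ is $(5/2)$-free, it is cubefree. By Remark~\ref{rem:cubefree-curl}, the new transform symbol is $2$ if $x$ ends in a square and $1$ otherwise. This is one square-suffix scan over all half-lengths.

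\emph{Transform condition.} The transform $C(x)$ is extended by one symbol, and I check that it has no suffix of exponent greater than $3$. For each period $p$, this means testing whether the suffix of length $3p+1$ has period $p$.

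To make the result trustworthy, I will also report an explicit witness of length $75$, up to interchange of $0$ and $1$, together with its transform. Readers can then check by hand that it satisfies both conditions. Independently, I will verify that each of its two one-letter extensions violates one of the conditions. This is not a proof of maximality, but it is a sanity check on the search. The full search would be rerun by an independent implementation, as in Appendix~\ref{sec:repro}.

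The main obstacle I expect is the size of the search frontier. Binary $(5/2)$-free words grow exponentially in number, so if the transform condition pruned only weakly, the levels near length $75$ could become very large. I expect the transform condition to prune strongly, because runs of $1$s and $2$s in $C(x)$ are tightly constrained. Even so, I would first deduplicate under the symbol swap to halve the work. If memory still became an issue, I would switch to depth-first search, which gives the same exhaustive answer with linear memory, and record the maximum depth reached.
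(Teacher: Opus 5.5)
Your proposal is correct and takes the same approach as the paper: an exhaustive, prefix-hereditary breadth-first search in which each transform symbol comes from a square-suffix test (justified by cubefreeness and Remark~\ref{rem:cubefree-curl}), and your incremental suffix checks at lengths $\lceil 5p/2\rceil$ and $3p+1$ match the paper's threshold conventions. The paper's proof reports the outcome your search must produce: exactly $4$ valid words at length $75$ (the words $w0$, $w1$ and their binary complements, with common length-$74$ prefix $w$) and none at length $76$.
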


\begin{proof} 
The exhaustive search finds exactly $4$ valid words of length $75$ and none of length $76$, hence the maximum length is $75$.  The two length-$75$ words beginning with $0$ are $w0$ and $w1$, where
$$w = 01100110110010011010010110010011011001011010011011001001101001011001101001.$$
Thus $w0$ and $w1$ are obtained by appending $0$ and $1$, respectively, to the common length-$74$ prefix $w$.  Together with their binary complements, these are all four words at level $75$.  Both $w0$ and $w1$ have curling-number transform
$$112121221222112221122122212112221122211221222121122211222112212221212212212.$$
\end{proof}

\section{The ternary alphabet}
\label{sec:ternary}

For ternary words, simultaneous overlap-freeness holds only up to length $84$.  The same bound is obtained for either of the two one-sided non-strict relaxations considered below.  We next relax the source and transform conditions separately.

\subsection{The simultaneous overlap-free obstruction}

\begin{theorem} 
The longest overlap-free ternary word whose curling-number transform is overlap-free has length $84$.
\label{thm:ternary-overlap-max}
\end{theorem}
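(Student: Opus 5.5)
The plan is to use the exhaustive prefix search of Section~\ref{sec:prelim}, as in the binary cases (Theorems~\ref{thm:binary-cube-cube-max}, \ref{thm:binary-cube-overlap-max}, \ref{thm:binary-52-3plus-max}). Level $n$ will consist of all ternary words $w$ of length $n$ such that $w$ is overlap-free and $C(w)$ is overlap-free. Validity is prefix-hereditary, so level $n+1$ is obtained by appending each of $0,1,2$ to each retained word and keeping the valid results. The goal is to exhibit a nonempty level $84$ and an empty level $85$. It follows that no valid word of length $>84$ exists, and the maximum is exactly $84$.

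To make each step cheap, I would test only new suffixes when appending a letter $a$ to a valid word $w$.

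\textbf{Source condition.} Any overlap in $wa$ not already in $w$ must be a suffix. So I check, for each period $p$ with $2p+1\le |wa|$, whether the suffix of length $2p+1$ has period $p$.

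\textbf{Transform condition.} Since $C(w)$ is a prefix of $C(wa)$, I only compute the single new value $C(wa)[|w|]$ and then run the same suffix-overlap test on $C(wa)$.

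The curling number of the new prefix is cheap to compute. Because $wa$ is overlap-free, it is in particular cubefree, so by Remark~\ref{rem:cubefree-curl} the new value is $2$ if $wa$ ends in a square and $1$ otherwise. The transform therefore lives over $\{1,2\}$. I would also reduce by the symmetric group on $\{0,1,2\}$, for example by fixing the first letter and the first occurrence order of letters. This shrinks the levels by a factor of up to $6$, since $C$ is invariant under renaming letters. At the end, I would report the level sizes at $84$ and $85$, and an explicit length-$84$ witness with its transform.

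The main obstacle is not conceptual but computational reliability: the level sizes may grow considerably before collapsing, and a single bug in the suffix tests would invalidate the bound. To address this, I would verify the result independently, as Appendix~\ref{sec:repro} indicates:
\begin{itemize}
\item a second implementation that recomputes $C$ from scratch and checks all factors by brute force on each retained word;
\item a depth-first search confirming that every branch dies by length $85$.
\end{itemize}
The explicit length-$84$ word certifies the lower bound directly, and anyone can check it by hand or by a short program.
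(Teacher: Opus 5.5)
Your proposal is correct and is essentially the paper's proof: an exhaustive breadth-first search over prefix-hereditary valid words, with each new transform symbol obtained from a square-suffix test because the source word is cubefree, followed by an explicit length-$84$ witness and an independent cross-check. The paper simply reports the outcome of this search: $6048$ valid words at length $84$ and none at length $85$. Under your $S_3$ normalization this would be $1008$ canonical representatives, since the action is free on words containing at least two distinct letters.
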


\begin{proof} 
The exhaustive search finds $6048$ valid words of length $84$ and none of length $85$, so the maximum length is $84$.  One length-$84$ example, grouped for readability, is
\begin{center}
\small\ttfamily
01101001020020\;21211002002100\;21101001020020\\
21101102002021\;21100200210021\;12201122021100
\end{center}
Its curling-number transform is
\begin{center}
\small\ttfamily
11211221211212\;21122121122112\;12211221211212\\
21211221121221\;12212112211212\;21211212211212
\end{center}
and is also overlap-free.
\end{proof}

\subsection{Relaxing the source exponent}

\begin{theorem} 
\label{thm:ternary-94plus-overlap}
There is an infinite $(9/4)^+$-free ternary word whose curling-number transform is overlap-free. 
\end{theorem}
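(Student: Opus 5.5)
The plan follows the template of Theorems~\ref{thm:binary-threeplus-overlap} and~\ref{thm:binary-52plus-3plus}. We look for a uniform morphism $h_3\colon\{0,1,2,3\}^*\to\{0,1,2\}^*$ such that $Q3=h_3({\bf p})$, or a slight modification of it, is the required word, and we verify it in {\tt Walnut}.

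To find $h_3$, we fix a block length $k$. We start small, around $k=6$ to $12$, and increase $k$ if no candidate survives. We then run the breadth-first search of Section~\ref{sec:morphic-walnut}, building the four images $h_3(0),\ldots,h_3(3)$ from the outside inward. At each stage we prune against a long finite prefix of ${\bf p}$. A candidate is discarded as soon as $h_3$ applied to that prefix contains a factor of exponent $>9/4$, or its curling-number transform contains an overlap.

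Two observations guide the search. First, by Theorem~\ref{thm:ternary-overlap-max} the source cannot be overlap-free, so the images must force some factors with exponent in $(2,9/4]$. Second, since $(9/4)^+$-free implies cubefree, Remark~\ref{rem:cubefree-curl} applies. Thus $C(Q3)$ is a binary word over $\{1,2\}$, equal to $2$ exactly at the positions where a prefix of $Q3$ ends in a square.

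For the verification, we first write a predicate ${\tt plus94free}$ asserting that there are no $i$ and $n\ge1$ with $Q3[i+j]=Q3[i+j+n]$ for all $j$ with $4(j+n)<9n$. Equivalently, for each period $n$ we check every factor of length $\lfloor 9n/4\rfloor+1$, using the bound ${\tt 4*t<=9*n}$ in the convention of Section~\ref{sec:morphic-walnut}. By the argument there, working with non-least periods suffices.

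Once ${\tt plus94free}$ holds, the source is cubefree. We then define ${\tt cn2}(n)$ as: there exists $m\ge1$ with $2m\le n+1$ and $Q3[n-2m+1+j]=Q3[n-m+1+j]$ for all $j<m$. By Remark~\ref{rem:cubefree-curl}, the resulting binary sequence ${\tt D3}$ equals $C(Q3)$ exactly. Finally, ${\tt checkoverlap}$ asserts that ${\tt D3}$ has no factor of length $2n+1$ with period $n$.

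All of these are first-order statements over the $2$-automatic sequence ${\bf p}$, with $h_3$ applied via a $k$-uniform substitution. If $k$ is not a power of $2$, $Q3$ is still automatic in a suitable mixed numeration, or it can be handled by indexing $Q3[kq+r]=h_3({\bf p}[q])[r]$ with bounded $r$. So {\tt Walnut} decides each statement, and returning TRUE completes the proof.

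The main obstacle is the search. Candidates must survive both conditions simultaneously: the source should be only barely non-overlap-free, while the square-suffix pattern must itself avoid overlaps. Short $k$ likely fail, so the search may need large $k$ or a modification of $h_3({\bf p})$ such as dropping a short prefix.

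A secondary obstacle is the size of the intermediate automata in ${\tt plus94free}$, which was already about $1.5$ million states for the $(5/2)^+$ case. If this blows up, I would first try choosing $k$ a power of $2$ so that $Q3$ stays base-$2$ automatic with few states.
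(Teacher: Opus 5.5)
\textbf{There is a genuine gap: the proposal never produces a witness, and its verification predicate contains an error.}

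Your framework is the paper's: a uniform morphism applied to ${\bf p}$, a {\tt Walnut} check of $(9/4)^+$-freeness, and then Remark~\ref{rem:cubefree-curl} to identify the square-suffix detector with the transform. But what you have written is a search procedure, not a proof. The theorem is an existence statement, and its whole content is the witness. Nothing in your argument shows that the search succeeds, let alone for $k$ between $6$ and $12$. The missing step is an explicit morphism on which the predicates actually return TRUE. The paper supplies one: a $32$-uniform $h_3$ with four specific images, and $B3=h_3({\bf p})$ is used unmodified. For the transform, the paper verifies the identity $C(h_3({\bf p}))[n]=1+{\bf t}[n+3]$, so overlap-freeness follows from Thue's theorem. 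Your direct overlap predicate would also be acceptable once a witness is in hand.

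Your ${\tt plus94free}$ predicate also has an off-by-one error, and here it is fatal rather than cosmetic. Requiring $Q3[i+j]=Q3[i+j+n]$ for all $j$ with $4(j+n)<9n$ inspects a factor of length $\lceil 9n/4\rceil$. That tests $(9/4)$-freeness. It is not equivalent to the length-$(\lfloor 9n/4\rfloor+1)$ check when $4\mid n$. By Theorem~\ref{thm:ternary-94-overlap-max}, the length-$85$ prefix of any valid infinite witness contains a factor of exponent at least $9/4$. Since the witness is $(9/4)^+$-free, that factor has exponent exactly $9/4$. So your predicate as written returns FALSE on every correct candidate, including the paper's. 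The right range is $4j\le 5n$, which is the paper's ${\tt 4*t<=5*n}$. In the paper's convention, where $t$ is the offset of the first compared letter, the bound ${\tt 4*t<=9*n}$ that you quote would only test for exponent greater than $13/4$.

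Finally, your worry about $k$ not being a power of $2$ is unfounded. The image of a $2$-automatic sequence under a uniform morphism of any length is again $2$-automatic. This is why {\tt image} works for $h_1$, $h_2$, and $h_4$, of lengths $5$, $25$, and $12$.
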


\begin{proof} 
Define
\begin{align*} 
h_3(0) &= 00100120012011201200101101211212 \\
h_3(1) &= 00100120012011201200120022122102 \\
h_3(2) &= 21002022021221210020021002112122 \\
h_3(3) &= 21002022001220010020021002102210 .
\end{align*}
Let $B3=h_3({\bf p})$.  Its {\tt Walnut} verification script is given in Appendix~\ref{app:h3}.

The predicate ${\tt free94}$ verifies that $B3$ is $(9/4)^+$-free, hence cubefree.  By Remark~\ref{rem:cubefree-curl}, ${\tt D3}=C(B3)$.  With the $0$-based indexing used here, ${\tt check3}$ verifies that
\[
C(h_3({\bf p}))[n]=1+{\bf t}[n+3]\qquad(n\ge 0).
\]
Thus $C(h_3({\bf p}))$ is obtained from the suffix ${\bf t}[3..]$ by the bijective coding $0\mapsto1$, $1\mapsto2$, and is therefore overlap-free.
\end{proof}

\begin{theorem} 
\label{thm:ternary-94-overlap-max}
The longest $(9/4)$-free ternary word whose curling-number transform is overlap-free has length $84$. 
\end{theorem}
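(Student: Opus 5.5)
The proof will combine a comparison with Theorem~\ref{thm:ternary-overlap-max} and an exhaustive search. The relationship between the conditions is the starting point. Every overlap $axaxa$ has length $2|x|+3$ and period $|x|+1$, so its exponent is
\[
2+\frac{1}{|x|+1}>2 .
\]
Conversely, a factor of exponent exactly $2$ (a square) is allowed in a $(9/4)$-free word. So $(9/4)$-free sits strictly between overlap-free and $(9/4)^+$-free. Every overlap-free word is $(9/4)$-free, so the $6048$ words of length $84$ from Theorem~\ref{thm:ternary-overlap-max} are already valid here, and the maximum is at least $84$. The real work is the upper bound: length $85$ must be impossible even though the source word may now contain overlaps of exponent strictly between $2$ and $9/4$. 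These are the factors $axaxa$ with $|x|\ge 3$, since exponent $2+1/(|x|+1)<9/4$ exactly when $|x|+1>4$.

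For the upper bound I would run the same breadth-first prefix search as in Section~\ref{sec:prelim}. The source test changes from overlap-freeness to $(9/4)$-freeness, and the transform test stays overlap-freeness. Validity is still prefix-hereditary, since both conditions forbid factors and $C(u)$ is a prefix of $C(uv)$, so the search is exhaustive.

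The new test costs little. Each time a letter is appended, I only need to examine suffixes of the new word. For each period $p$, compute the longest suffix with period $p$ and reject if its length is at least $\lceil 9p/4\rceil$; this is the non-strict bound from Section~\ref{sec:morphic-walnut}. The curling number of the new prefix is read off from the same suffix-period data, as the largest $k$ for which the word ends in some $z^k$. Because the source is $(9/4)$-free, hence cubefree, Remark~\ref{rem:cubefree-curl} says this number is $1$ or $2$ according to whether the new prefix ends in a square. The transform letter is appended and its suffixes are checked for overlaps in the same way.

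The search result I expect, given that Table~\ref{tab:results} lists this case with maximum $84$, is that some level $84$ is nonempty and level $85$ is empty. The proof would then report the two level sizes, as in the earlier proofs. One would also expect the level-$84$ population to be larger than $6048$, or possibly equal if the extra overlaps never survive.

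The main obstacle is that the argument is purely computational. There is no structural reason visible in advance why overlaps with long middles cannot prolong the word, since Theorem~\ref{thm:ternary-94plus-overlap} shows that allowing exponent exactly $9/4$ already gives infinite words. So the decisive point is the correctness of the boundary handling: powers of exponent exactly $9/4$ must be rejected while smaller exponents such as $11/5$ are allowed. I would cross-check this by rerunning the search with the strict $(9/4)^+$ test. It should not terminate early, and its prefixes should be consistent with $h_3({\bf p})$. An independent implementation, as in Appendix~\ref{sec:repro}, would then confirm the level sizes.
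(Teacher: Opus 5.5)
Your proposal matches the paper's proof: the lower bound is inherited from the overlap-free examples of Theorem~\ref{thm:ternary-overlap-max}, and the upper bound comes from the same exhaustive prefix search with a $(9/4)$-free source test and an overlap-free transform test; the paper reports $10368$ valid words at length $84$ and none at length $85$. One small slip: $2+1/(|x|+1)<9/4$ holds exactly when $|x|\ge 4$, not $|x|\ge 3$ (for $|x|=3$ the exponent is exactly $9/4$, which is forbidden), but this side remark plays no role in the search.
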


\begin{proof} 
The exhaustive search finds $10368$ valid words of length $84$ and none of length $85$, so the maximum length is $84$.  The example displayed in the proof of Theorem~\ref{thm:ternary-overlap-max} is among the words of length $84$.
\end{proof}

\subsection{Relaxing the transform exponent}

\begin{theorem} 
\label{thm:ternary-overlap-73plus}
There is an infinite overlap-free ternary word whose curling-number transform is $(7/3)^+$-free. 
\end{theorem}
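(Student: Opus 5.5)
The plan follows the template of Theorems~\ref{thm:binary-threeplus-overlap}, \ref{thm:binary-52plus-3plus} and~\ref{thm:ternary-94plus-overlap}. I would exhibit an explicit uniform morphism $h_4\colon\{0,1,2,3\}^*\to\{0,1,2\}^*$ and take $B4=h_4({\bf p})$, or a slight modification such as dropping a short prefix. The morphism would be found by the breadth-first search of Section~\ref{sec:morphic-walnut}. Candidate images $h_4(0),\ldots,h_4(3)$ of a common length $\ell$ are extended from the outside inward. At each step a candidate is pruned as soon as the images of a long prefix of ${\bf p}$ contain an overlap, or as soon as the curling-number transform of that prefix contains a factor of exponent greater than $7/3$.

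A natural first guess is to start from the length-$84$ extremal words of Theorem~\ref{thm:ternary-overlap-max}. Their transform is built from the blocks $1$, $2$, $11$, $22$, which suggests that the infinite transform should be something like a Thue--Morse-type word that is overlap-free except for a few factors of exponent exactly $7/3$, such as $x y x y x$ with $|x|=1$, $|y|=2$. I would try $\ell$ around $16$ to $32$, since the ternary constructions above needed length $32$.

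Once a candidate is found, the verification proceeds in {\tt Walnut} in four steps.

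\begin{enumerate}
\item Build the automaton for $B4$ from {\tt P} by the usual uniform-morphism construction.
\item Prove that $B4$ is overlap-free. This uses the predicate asserting that there is no $i$ and $p\ge1$ with $B4[i+j]=B4[i+j+p]$ for all $j<p+1$, with bound ${\tt t<=2*n}$ as in Section~\ref{sec:morphic-walnut}. In particular $B4$ is cubefree.
\item By Remark~\ref{rem:cubefree-curl}, $C(B4)[n]=2$ exactly when $B4[0..n]$ ends in a square, and $C(B4)[n]=1$ otherwise. A first-order predicate ${\tt has2}$ therefore defines an automaton ${\tt D4}$ with ${\tt D4}=C(B4)$.
\item Check with the bound ${\tt 3*t<=7*n}$, the length $\lfloor 7p/3\rfloor+1$, that ${\tt D4}$ has no factor of exponent greater than $7/3$.
\end{enumerate}

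Alternatively, as in Theorem~\ref{thm:ternary-94plus-overlap}, one might find that $C(B4)$ equals a known word such as a coding of a shift of ${\bf t}$ or of ${\bf p}$. In that case the $(7/3)^+$-freeness could be imported from known results, e.g.\ the known $7/3$-power-freeness of suitable Thue--Morse-related words.

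The main obstacle is the search itself, not the verification. The constraint on the transform is non-local, since squares of arbitrary length affect curling numbers, so pruning by finite prefixes may keep many dead branches. The Walnut computation may also blow up, as in Theorem~\ref{thm:binary-52plus-3plus}, where an intermediate automaton reached $1.5$ million states.

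If no pure morphic image $h_4({\bf p})$ works, I would next try two variants. The first is to compose with a modification, for example prepending a short fixed word or taking a suffix, exploiting that $C$ depends on the whole prefix. The second is to replace ${\bf p}$ by ${\bf t}$ or by another $2$-automatic Thue--Morse coding. The $(7/3)$ threshold then comes from Theorem~\ref{thm:ternary-overlap-max}, whose finiteness forces the transform to contain some factor of exponent strictly greater than $2$.
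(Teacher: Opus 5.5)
The verification pipeline you describe is the paper's: take $B4=h_4({\bf p})$, verify in {\tt Walnut} that $B4$ is overlap-free (hence cubefree), use Remark~\ref{rem:cubefree-curl} to identify the square-suffix automaton ${\tt D4}$ with $C(B4)$, and check ${\tt D4}$ for $(7/3)^+$-freeness. The gap is that you never produce the morphism. For an existence statement certified by computer, the explicit witness is the proof. A description of how you would search for one does not show that any suitable word exists, especially when you list fallbacks in case the search fails and call the search the main obstacle. The paper supplies the missing object, the $12$-uniform morphism
\[
h_4(0)=001001200122,\quad h_4(1)=001011010200,\quad h_4(2)=101100211002,\quad h_4(3)=100120012200.
\]
It is applied to ${\bf p}$ with no prefix modification. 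The two {\tt Walnut} evaluations ${\tt overlapfree4}$ and ${\tt free73}$ then settle the theorem.

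There are two smaller problems. First, your {\tt Walnut} bounds are wrong. In step~2 your quantifier $j<p+1$ is correct, but the bounds you cite, ${\tt t<=2*n}$ there and ${\tt 3*t<=7*n}$ in step~4, bound the index within the whole factor rather than the comparison offset. In the paper's predicates, $t$ is the offset in $X[i+t]=X[i+n+t]$. So overlap-freeness needs ${\tt t<=n}$, and $(7/3)^+$-freeness needs ${\tt 3*t<=4*n}$, which gives checked length $n+\lfloor 4n/3\rfloor+1=\lfloor 7n/3\rfloor+1$. Written in that form, your bounds would certify only $3^+$-freeness and $(10/3)^+$-freeness. The former does not even give the cubefreeness that Remark~\ref{rem:cubefree-curl} needs.

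Second, your alternative of recognizing $C(B4)$ as a coding of a shift of ${\bf t}$ cannot work here. Such a transform would be overlap-free, and together with the overlap-free ternary word $B4$ it would contradict Theorem~\ref{thm:ternary-overlap-max}. So, unlike in Theorem~\ref{thm:ternary-94plus-overlap}, the transform must contain overlaps, and the $(7/3)^+$ condition has to be verified directly.
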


\begin{proof} 
Define
\begin{align*} 
h_4(0) &= 001001200122 \\
h_4(1) &= 001011010200 \\
h_4(2) &= 101100211002 \\
h_4(3) &= 100120012200 .
\end{align*}
Let $B4=h_4({\bf p})$.  The {\tt Walnut} verification script is given in Appendix~\ref{app:h4}.  The predicate ${\tt overlapfree4}$ verifies that $B4$ is overlap-free, hence cubefree.  Remark~\ref{rem:cubefree-curl} then shows that the square-suffix detector used to define ${\tt D4}$ gives exactly $C(B4)$.  Finally, ${\tt free73}$ verifies that this transform is $(7/3)^+$-free.
\end{proof}

\begin{theorem} 
\label{thm:ternary-overlap-73-max}
The longest overlap-free ternary word whose curling-number transform is $(7/3)$-free has length $84$. 
\end{theorem}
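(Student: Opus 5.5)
The bound will come from the exhaustive breadth-first prefix search of Section~\ref{sec:prelim}, run with the pair of conditions ``source overlap-free'' and ``transform $(7/3)$-free''. Both conditions are prefix-hereditary. For the source this is immediate. For the transform it holds because $C(u)$ is a prefix of $C(uv)$. So the level sets computed by the search contain every valid word. An overlap-free source is cubefree, so Remark~\ref{rem:cubefree-curl} applies. The transform is therefore binary on $\{1,2\}$, and each new symbol is determined by whether the current prefix ends in a square.

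The incremental test for appending a letter $a$ to a retained word $u$ has three parts:
\begin{enumerate}
\item Check that $ua$ has no overlap suffix, i.e., no suffix of length $2p+1$ with period $p$.
\item Compute $c=C(ua)[|u|]\in\{1,2\}$ by testing for a square suffix.
\item Check that $C(u)c$ has no suffix with a period $p$ and length $\lceil 7p/3\rceil$, as justified in Section~\ref{sec:morphic-walnut}.
\end{enumerate}
Only suffixes need to be tested, since every earlier factor was already checked at the previous level. I would also reduce by the symmetric group on $\{0,1,2\}$. Both conditions and $C$ are invariant under renaming letters, so it suffices to search words beginning $01$ (after a possible initial $00$) and multiply the counts at the end.

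A comparison with Theorem~\ref{thm:ternary-overlap-max} is useful here. Every word valid for the present theorem is also valid there, because $(7/3)$-free implies overlap-free. Hence the maximum is at most $84$ automatically, and the search only needs to establish a lower bound. For that it suffices to exhibit one word of length $84$ whose transform is $(7/3)$-free. My first candidate is the example displayed in the proof of Theorem~\ref{thm:ternary-overlap-max}: I would check directly that its transform contains no factor of exponent $\ge 7/3$. If that fails, I would filter the $6048$ length-$84$ words found there by the stronger transform condition. Running the full search independently would still be reported, giving the number of words at level $84$ and confirming that level $85$ is empty.

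The main obstacle is correctness rather than feasibility, since the level sizes are only in the thousands. The step I expect to be delicate is the non-strict exponent test. The minimal length $\lceil 7p/3\rceil$ must be used, not $\lfloor 7p/3\rfloor+1$. In particular, a factor of length exactly $7$ with period $3$ must be rejected, as in {\tt entente}. I would cross-check the final counts with an independent implementation, as the paper does in Appendix~\ref{sec:repro}.
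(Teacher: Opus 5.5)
There is a genuine error in your reduction: the containment runs the other way. Since $7/3>2$, every factor of exponent at least $7/3$ already has exponent greater than $2$. So an overlap-free transform is automatically $(7/3)$-free, but a $(7/3)$-free transform need not be overlap-free. For example, an overlap $axaxa$ with $|ax|=p\ge 4$ has exponent $2+1/p<7/3$; concretely, $122112211$ is an overlap of period $4$, yet it contains no factor of exponent $\ge 7/3$.

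Hence the transform condition here is \emph{weaker} than in Theorem~\ref{thm:ternary-overlap-max}, and the words valid there form a subset of the words valid here. What comes for free is the \emph{lower} bound: all $6048$ length-$84$ words of Theorem~\ref{thm:ternary-overlap-max}, including the displayed example, remain valid. Your ``first candidate'' check therefore cannot fail, and your ``filter by the stronger condition'' fallback would remove nothing. The \emph{upper} bound, which you call automatic, is the actual content of the theorem. It does not follow from Theorem~\ref{thm:ternary-overlap-max} at all.

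So the proof must rest on the full exhaustive search run with the relaxed transform test, establishing that level $85$ is empty. This cannot be obtained by reusing or filtering the level sets from Theorem~\ref{thm:ternary-overlap-max}, because the weaker condition may keep extra prefixes alive at intermediate levels. That search is exactly the paper's proof: $6048$ words at level $84$ and none at $85$. The equality of the level-$84$ count with Theorem~\ref{thm:ternary-overlap-max} is an output of the search; containment alone only gives $\ge 6048$.

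Your implementation details are all sound:
\begin{itemize}
\item suffix-only incremental tests;
\item computing the transform symbol by a square-suffix test via Remark~\ref{rem:cubefree-curl};
\item using length $\lceil 7p/3\rceil$ for the non-strict exponent test;
\item the $S_3$ symmetry reduction.
\end{itemize}
Once you make the full search the main argument rather than an optional confirmation, your proposal coincides with the paper's proof.
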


\begin{proof} 
Here the exhaustive search again finds $6048$ valid words of length $84$ and none of length $85$.  Thus the maximum length is $84$, and the example displayed in Theorem~\ref{thm:ternary-overlap-max} is one of the length-$84$ words.
\end{proof}

\section{Four letters: simultaneous overlap-freeness}
\label{sec:four}

\begin{theorem} 
\label{thm:four-overlap}
There is an infinite overlap-free word over the alphabet $\{0,1,2,3\}$ whose curling-number transform is overlap-free. 
\end{theorem}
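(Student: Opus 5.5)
The plan follows the same template as Theorems~\ref{thm:ternary-94plus-overlap} and~\ref{thm:ternary-overlap-73plus}. I would look for a uniform morphism $h_5\colon\{0,1,2,3\}^*\to\{0,1,2,3\}^*$ such that $B5=h_5({\bf p})$ is overlap-free and $C(B5)$ is also overlap-free. Candidate images would come from the breadth-first search of Section~\ref{sec:morphic-walnut}, extending images from the outside inward. The search would be seeded by the length-$84$ ternary obstruction of Theorem~\ref{thm:ternary-overlap-max}: the fourth letter is what lets the word escape the obstruction, so the candidate images should use the letter $3$ sparingly, as a separator that breaks the long overlaps forced in the ternary case. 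I would start with small lengths (around $8$--$16$) and increase until a candidate survives a long finite check of both conditions.

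Once a candidate $h_5$ is fixed, the verification runs in {\tt Walnut}. First, build {\tt P} as in Section~\ref{sec:morphic-walnut}, and from it the automaton for $B5[n]$ by splitting $n=\ell q+r$, where $\ell$ is the image length and $0\le r<\ell$. For a uniform $h_5$ this can be written directly with {\tt combine}, or as a predicate on $(q,r)$ read in base $2$ with a finite case split on $r$.

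Second, check that $B5$ is overlap-free with the predicate ``there exist $i$ and $p\ge1$ such that $B5[i+t]=B5[i+t+p]$ for all $t$ with $t+p\le 2p$'', that is, ${\tt t<=2*n}$ as in the paper's bound convention. {\tt Walnut} should return {\tt FALSE}. Overlap-freeness implies cubefreeness, so Remark~\ref{rem:cubefree-curl} applies: $C(B5)[n]=2$ exactly when $B5[0..n]$ ends in a square, and $C(B5)[n]=1$ otherwise.

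Third, define the square-suffix detector {\tt has2}$(n)$: there exists $m\ge1$ with $2m\le n+1$ and $B5[n-2m+1+j]=B5[n-m+1+j]$ for all $j<m$. Set ${\tt D5}$ to $2$ where {\tt has2} holds and $1$ elsewhere, so that ${\tt D5}=C(B5)$.

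Fourth, run the same overlap-freeness predicate on {\tt D5}. Ideally, as in Theorem~\ref{thm:ternary-94plus-overlap}, {\tt Walnut} would show that ${\tt D5}$ equals a coding of a shift of ${\bf t}$, which immediately gives overlap-freeness and keeps the automata small.

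I expect the main obstacle to be the first step: actually finding $h_5$. The search space grows as $4^{4\ell}$, and a finite prefix check cannot guarantee infinitude. If short lengths fail, I would try the ``slight modification'' allowed in Section~\ref{sec:morphic-walnut}, such as dropping a finite prefix of $h_5({\bf p})$ to remove a bad initial transform segment. I would also prioritize candidates whose transform on long prefixes matches ${\bf t}$ shifted and coded by $0\mapsto1$, $1\mapsto2$, since that structure makes the final {\tt Walnut} check feasible. A secondary risk is state blowup in the overlap predicate, as seen with the $1{,}467{,}844$-state intermediate automaton in Theorem~\ref{thm:binary-52plus-3plus}. Shorter images reduce that risk.
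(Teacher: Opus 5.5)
Your verification pipeline is the one the paper uses. It runs: overlap-freeness of the source, hence cubefreeness, hence Remark~\ref{rem:cubefree-curl} identifies the square-suffix detector with the transform, and finally the transform is matched against a coding of a shifted ${\bf t}$. But the proposal is a search plan, not a proof. The theorem is an existence statement whose entire content is an explicit witness together with a successful {\tt Walnut} verification, and you supply neither. ``Search until a candidate survives'' does not show that one exists; nothing in your plan rules out that every morphism you try fails. The paper closes this gap by exhibiting a specific $16$-uniform morphism ($h_5(0)=1001200122322300$, etc.). It then checks in {\tt Walnut} that the resulting word is overlap-free and that its transform equals $1+{\bf t}[n+3]$.

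Moreover, the template you commit to, $B5=h_5({\bf p})$ unmodified, is not what works for that morphism. With the paper's $h_5$, the word $h_5({\bf p})$ begins with $h_5(1)=1001200122003220$. Its prefixes $1001$, $10012$, $100120$ have no square suffix, so $C(h_5({\bf p}))$ begins $112111$ and contains the overlap $111$. The paper's witness is instead $w=00h_5({\bf p})$. There a prefix is prepended, not deleted as in your fallback, and only then does the transform become $1+{\bf t}[n+3]$. Because $C$ is not shift-invariant (square suffixes can reach back into added letters), this choice is an essential part of the construction, not a cosmetic adjustment.

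A smaller slip: your overlap test ``for all $t$ with $t+p\le 2p$'' is correct, but it corresponds to ${\tt t<=n}$, not ${\tt t<=2*n}$. The latter only certifies $3^+$-freeness, so it would not establish overlap-freeness of either the word or its transform.
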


\begin{proof} 
Define $h_5$ by
\begin{align*} 
h_5(0) &= 1001200122322300\\
h_5(1) &= 1001200122003220\\
h_5(2) &= 0313110021100200\\
h_5(3) &= 0313112202203003 .
\end{align*}
Consider the word
\[
w=00h_5({\bf p})=0010012001220032200313112202203\cdots.
\]
Its curling-number transform begins $12112211212212112\cdots$.  In the {\tt Walnut} script, ${\tt QP5}$ represents $w$; the verification script is given in Appendix~\ref{app:h5}.  The predicate ${\tt overlapfree5}$ verifies that $w$ is overlap-free, hence cubefree.  Remark~\ref{rem:cubefree-curl} therefore identifies ${\tt D5}$ with $C(w)$, and ${\tt check5}$ verifies
\[
C(w)[n]=1+{\bf t}[n+3]\qquad(n\ge 0).
\]
Hence $C(w)$ is the bijective recoding $0\mapsto1$, $1\mapsto2$ of ${\bf t}[3..]$, and is overlap-free.
\end{proof}

\begin{corollary}
\label{cor:alphabet-threshold}
The smallest alphabet on which there exists an infinite word $w$ such that both $w$ and $C(w)$ are overlap-free has size $4$.
\end{corollary}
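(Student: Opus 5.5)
The corollary follows by combining the existence result over four letters with nonexistence results for alphabets of size $1$, $2$, and $3$. Existence for size $4$ is exactly Theorem~\ref{thm:four-overlap}: the word $w=00h_5({\bf p})$ over $\{0,1,2,3\}$ is overlap-free and $C(w)$ is overlap-free. It remains to show that no infinite word over a smaller alphabet has both properties.

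For alphabet size $3$, we use Theorem~\ref{thm:ternary-overlap-max} together with the prefix-hereditary property from the exhaustive-search subsection. Suppose ${\bf w}$ is an infinite ternary word with ${\bf w}$ and $C({\bf w})$ both overlap-free. Let $u={\bf w}[0..84]$ be its prefix of length $85$. Every factor of $u$ is a factor of ${\bf w}$, so $u$ is overlap-free. Since $C(u)$ is a prefix of $C({\bf w})$, the transform $C(u)$ is also overlap-free. This contradicts the maximum length $84$ in Theorem~\ref{thm:ternary-overlap-max}.

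For alphabet size at most $2$, we argue that a binary word is a ternary word that happens to use only two letters, and the curling-number transform does not depend on the ambient alphabet. Hence any infinite binary (or unary) example would be an infinite ternary example, which the previous paragraph rules out. Alternatively, one could invoke Theorem~\ref{thm:binary-cube-overlap-max}: overlap-free implies cubefree, so the source word has length at most $13$. For a unary alphabet, the word $000$ is already an overlap, so no infinite overlap-free unary word exists at all.

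The only step needing care is the passage from infinite words to finite prefixes. It rests on two facts: overlap-freeness is inherited by factors, and $C$ commutes with taking prefixes, since $C(x)[i]$ depends only on $x[0..i]$. Both follow directly from the definitions, so we expect no real obstacle. All the substantive content lies in the earlier computational theorems.
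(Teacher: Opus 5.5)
Your proposal is correct and follows the paper's proof: Theorem~\ref{thm:ternary-overlap-max} rules out three letters, and Theorem~\ref{thm:four-overlap} supplies the four-letter example. You also spell out two steps the paper leaves implicit: passing from an infinite word to its length-$85$ prefix via prefix-heredity, and reducing alphabets of size at most $2$ to the ternary case.
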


\begin{proof}
Theorem~\ref{thm:ternary-overlap-max} rules out such an infinite word over three letters, while Theorem~\ref{thm:four-overlap} gives an infinite example over four letters.
\end{proof}

\begin{remark}
The constructions in Theorems~\ref{thm:ternary-94plus-overlap} and~\ref{thm:four-overlap} start from the same $2$-block coding ${\bf p}$ and have the same curling-number transform:
\[
C(h_3({\bf p}))[n]=C(w)[n]=1+{\bf t}[n+3]\qquad(n\ge 0).
\]
Thus both transforms are bijective codings of the same shifted Thue--Morse word.  A general characterization of morphic words with this property is not known.
\end{remark}

\appendix
\section{Walnut verification scripts for the infinite constructions}
\label{app:walnut}

The following {\tt Walnut} scripts certify the five infinite constructions and are reproduced without alteration.

\subsection{\texorpdfstring{Binary $3^+$-free source and overlap-free transform}{Binary 3+-free source and overlap-free transform}}
\label{app:h1}
\begin{verbatim} 
morphism h1 "0->01110 1->00101 2->10111 3->01100": 
image Q1 h1 P:

eval threeplusfree "~Ei,n n>=1 & At (t<=2*n) => Q1[i+t]=Q1[i+n+t]":

def has2 "Ei,c c>=1 & i+2*c=n+1 & At (t<c) => Q1[i+t]=Q1[i+t+c]":
def has3 "Ei,c c>=1 & i+3*c=n+1 & At (t<2*c) => Q1[i+t]=Q1[i+t+c]":
def cn1 "~$has2(n)":
def cn2 "$has2(n) & ~$has3(n)":
def cn3 "$has3(n)":
combine D1 cn1=1 cn2=2 cn3=3:

eval checkoverlap "~Ei,n n>=1 & At (t<=n) => D1[i+t]=D1[i+t+n]": 
\end{verbatim}

\subsection{\texorpdfstring{Binary $(5/2)^+$-free source and $3^+$-free transform}{Binary (5/2)+-free source and 3+-free transform}}
\label{app:h2}
\begin{verbatim} 
morphism h2 "0->1011001001101001011001101 1->0110010011010010110010100 
2->1101100101101001100101001 3->1101100100110100101100100": 
image Q2 h2 P: 
eval plus52free "~Ei,n n>=1 & At (2*t<=3*n) => Q2[i+t]=Q2[i+n+t]"::
def cn2q2 "Ei,c c>=1 & i+2*c=n+1 & At (t<c) => Q2[i+t]=Q2[i+t+c]":
def cn1q2 "~$cn2q2(n)":
combine D2 cn1q2=1 cn2q2=2:

eval check2 "~Ei,n n>=1 & At (t<=2*n) => D2[i+t]=D2[i+t+n]":: 
\end{verbatim}

\subsection{\texorpdfstring{Ternary $(9/4)^+$-free source and overlap-free transform}{Ternary (9/4)+-free source and overlap-free transform}}
\label{app:h3}
\begin{verbatim} 
morphism h3 
"0->00100120012011201200101101211212  1->00100120012011201200120022122102 
2->21002022021221210020021002112122 3->21002022001220010020021002102210":

image B3 h3 P:

eval free94 "~Ei,n n>=1 & At (4*t<=5*n) => B3[i+t]=B3[i+n+t]"::

def curl3sq "Ei,c c>=1 & i+2*c=n+1 & At (t<c) => B3[i+t]=B3[i+t+c]":
def nocurl3 "~$curl3sq(n)":
combine D3 curl3sq=2 nocurl3=1:

eval check3 "An (T[n+3]=@0 <=> D3[n]=@1)": 
\end{verbatim}

\subsection{\texorpdfstring{Ternary overlap-free source and $(7/3)^+$-free transform}{Ternary overlap-free source and (7/3)+-free transform}}
\label{app:h4}
\begin{small}
\begin{verbatim} 
morphism h4 "0->001001200122 1->001011010200 2->101100211002 3->100120012200":

image B4 h4 P:

eval overlapfree4 "~Ei,n n>=1 & At (t<=n) => B4[i+t]=B4[i+n+t]"::

def curl4sq "Ei,c c>=1 & i+2*c=n+1 & At (t<c) => B4[i+t]=B4[i+t+c]":
def nocurl4 "~$curl4sq(n)":
combine D4 curl4sq=2 nocurl4=1:

eval free73 "~Ei,n n>=1 & At (3*t<=4*n) => D4[i+t]=D4[i+n+t]": 
\end{verbatim}
\end{small}

\subsection{Four-letter simultaneous overlap-freeness}
\label{app:h5}
\begin{verbatim} 
morphism h5 "0->1001200122322300 1->1001200122003220 
2->0313110021100200 3->0313112202203003":

image Q5 h5 P:

def qp0 "(n<=1) | (n>1 & Q5[n-2]=@0)": 
def qp1 "n>1 & Q5[n-2]=@1": 
def qp2 "n>1 & Q5[n-2]=@2": 
def qp3 "n>1 & Q5[n-2]=@3":

combine QP5 qp0=0 qp1=1 qp2=2 qp3=3:

eval overlapfree5 "~Ei,n n>=1 & At (t<=n) => QP5[i+t]=QP5[i+n+t]":

def curl5sq "Ei,c c>=1 & i+2*c=n+1 & At (t<c) => QP5[i+t]=QP5[i+t+c]":
def nocurl5 "~$curl5sq(n)":
combine D5 curl5sq=2 nocurl5=1:

eval check5 "An (D5[n]=@1 <=> T[n+3]=@0)": 
\end{verbatim}

\section{Computational reproducibility}
\label{sec:repro}

The supplementary C++17 program {\tt bfs\_verify.cpp} reproduces the six finite maximality results using the exhaustive prefix search from Section~\ref{sec:prelim}.  In all six cases the source condition implies cubefreeness, so Remark~\ref{rem:cubefree-curl} allows the next transform symbol to be obtained from a square-suffix test.

The program also implements the general definition of curling number, without assuming cubefreeness, as a separate check.  It verifies that every displayed example occurs at the stated terminal level and has the stated transform.  The file {\tt bfs\_verify\_output.txt} records the six terminal counts and these independent checks.  The program can be compiled and run with
\begin{verbatim}
c++ -O3 -std=c++17 bfs_verify.cpp -o bfs_verify
./bfs_verify
\end{verbatim}
The supplementary archive also contains the consolidated {\tt Walnut} input file \path{walnut_final.txt}, a README describing the reproducibility files, and SHA-256 checksums for the archived components.


\section*{Declaration of AI usage}

GPT-5.6 Sol and GPT-5.6 Sol Pro, including runs using Ultra mode, assisted in the search for the positive constructions in Sections~\ref{sec:binary}--\ref{sec:four}, in reviewing and debugging {\tt Walnut} predicates and scripts, in editing and checking proof explanations, and in preparing the supplementary C++ reproducibility verifier.  The authors retain responsibility for the final {\tt Walnut} executions, verification of the Walnut scripts and computational outputs, and all mathematical claims.  Two authors carried out the finite searches independently in APL and Python; both implementations and the supplementary C++ verifier give the reported terminal counts.

\end{document}